\documentclass[11pt,reqno]{amsart}
\usepackage[a4paper, hmargin={2.7cm,2.7cm},vmargin={3.3cm,3.3cm}]{geometry}
\usepackage[english]{babel}
\usepackage{color}
\usepackage[noadjust]{cite}
\usepackage{amsmath}
\usepackage{amssymb}
\usepackage{amsfonts}
\usepackage{amsthm}
\usepackage{dsfont}
\usepackage{enumerate}
\usepackage{amsthm}
\usepackage{todonotes}
\usepackage{hyperref}
\usepackage{etoolbox, url}

\hypersetup{
	colorlinks   = true, 
	urlcolor     = blue, 
	linkcolor    = blue, 
	citecolor   = red 
}

\linespread{1.10}
\setlength{\parskip}{0.5em}
\numberwithin{equation}{section}

\makeatletter
\@namedef{subjclassname@2020}{\textup{2020} Mathematics Subject Classification}
\makeatother

\makeatletter
\patchcmd{\ttlh@hang}{\parindent\z@}{\parindent\z@\leavevmode}{}{}
\patchcmd{\ttlh@hang}{\noindent}{}{}{}
\makeatother

\theoremstyle{plain}
\newtheorem{theorem}{Theorem}

\theoremstyle{definition}

\theoremstyle{remark}

\DeclareMathOperator{\vol}{vol}
\DeclareMathOperator{\Span}{span}

\title{Comment on ``Affine density, von Neumann dimension and a problem of Perelomov''}

\author{Jos\'e Luis Romero}

\address{Faculty of Mathematics, University of Vienna, Oskar-Morgenstern-Platz 1, A-1090 Vienna, Austria.}
\email{jose.luis.romero@univie.ac.at}



\begin{document}

\maketitle

\begin{abstract}
 We point out that the main theorem of Ref2 := [Adv. Math. 407, Article ID 108564, 22 p. (2022)] is included in the prior research survey Ref1 := [Expo. Math., 40(2), 265-301, 2022]. For context, we also reproduce the rather simple proof given in Ref1 (essentially due to Janssen). We also offer a brief discussion of the literature that is alternative to that given in Ref2. Specifically, we differ with Ref2 with respect to novelty, relevance, and attribution of stated results (and, to some extent, correctness of statements).
\end{abstract}

\section{Comparison of presented results}
Our research survey article \cite{romero2022density} contains the following statement concerning a locally compact, second countable unimodular group $G$.
\begin{theorem}[{\cite[Theorem 7.4]{romero2022density}}] \label{th_rvv}
	Let $\Gamma \subseteq G$ be a lattice and let
	$(\pi, \mathcal{H}_\pi)$ be a discrete series $\sigma$-representation of $G$ of formal dimension $d_{\pi} > 0$.
	\begin{enumerate}[(i)]
		\item[(i)] If $\pi|_{\Gamma}$ admits a cyclic vector, then $\vol (G / \Gamma) d_{\pi} \leq 1$.
		
		\noindent In particular, if $\pi|_{\Gamma}$ admits a frame vector, then  $\vol (G / \Gamma) d_{\pi} \leq 1$.
		
		\item[(ii)] If $\pi|_{\Gamma}$ admits a Riesz vector, then  $\vol (G / \Gamma) d_{\pi} \geq 1$.
		
		\item[(iii)] Suppose $(\Gamma, \sigma)$ satisfies Kleppner's condition.
		If $\pi(\Gamma)''$ admits a separating vector, then $\vol (G / \Gamma)d_{\pi} \geq 1$.
	\end{enumerate}
\end{theorem}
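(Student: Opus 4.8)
The plan is to route everything through the von Neumann algebra $M := \pi(\Gamma)''$ generated by the projective unitaries $\pi(\gamma)$, $\gamma \in \Gamma$. Since $\Gamma$ is discrete, $M$ is a finite von Neumann algebra carrying a canonical trace $\tau$ (the one with $\tau(\pi(e)) = 1$ and $\tau(\pi(\gamma)) = 0$ for $\gamma \neq e$), so the Murray--von Neumann coupling constant / von Neumann dimension $\dim_M \mathcal{H}_\pi$ is well defined. The single analytic input I would use is the identity
\[
\dim_M \mathcal{H}_\pi \;=\; \vol(G/\Gamma)\, d_\pi ,
\]
which is exactly the ``rather simple'' computation alluded to in the abstract: it follows by pairing the discrete series orthogonality relations for $(\pi, d_\pi)$ with Weil's quotient-integral formula for $G/\Gamma$, most transparently via the Janssen-type trace identity $\operatorname{Tr}_{\pi(\Gamma)'}(V_f^* V_f) = \| f \|^2$ for the coefficient operator $V_f \colon g \mapsto (\langle g, \pi(\gamma) f \rangle)_{\gamma \in \Gamma}$ of a Bessel vector $f$, together with the fact that $V_f$ intertwines $\pi|_\Gamma$ with the twisted left regular representation $\lambda_\sigma$ on $\ell^2(\Gamma)$ (so that the canonical semifinite traces on $\pi(\Gamma)'$ and on $\lambda_\sigma(\Gamma)'$ match through $V_f$). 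I expect this displayed identity to be the only real obstacle; granting it, the three assertions become textbook facts about finite von Neumann algebras.

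For (i): a cyclic vector $f$ for $\pi|_\Gamma$ is the same thing as a cyclic vector for $M$, since by the Kaplansky density theorem $\overline{\operatorname{span}}\{ \pi(\gamma) f : \gamma \in \Gamma \} = \overline{M f}$. For any finite von Neumann algebra with a fixed trace, the existence of a cyclic vector forces the von Neumann dimension of the module to be $\leq 1$ (for a factor this is Murray--von Neumann; in general one decomposes over the centre and integrates), so $\vol(G/\Gamma)\, d_\pi = \dim_M \mathcal{H}_\pi \leq 1$. Since any frame for $\mathcal{H}_\pi$ is in particular complete, a frame vector is cyclic, which yields the ``in particular'' statement.

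For (ii): if $f$ is a Riesz vector, i.e.\ $\{ \pi(\gamma) f \}_{\gamma \in \Gamma}$ is a Riesz sequence, then $e_\gamma \mapsto \pi(\gamma) f$ extends to a bounded, bounded-below synthesis map $T \colon \ell^2(\Gamma) \to \mathcal{H}_\pi$ intertwining $\lambda_\sigma$ with $\pi|_\Gamma$. Since $T^* T$ lies in the commutant of $\lambda_\sigma(\Gamma)$, its polar decomposition turns $T$ into a unitary $M$-module isomorphism from $\ell^2(\Gamma)$ onto the closed submodule $V := \overline{\operatorname{span}}\{ \pi(\gamma) f \}$; as $\ell^2(\Gamma)$ is the standard module of the twisted group von Neumann algebra, $\dim_M V = 1$. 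Monotonicity of von Neumann dimension then gives $\vol(G/\Gamma)\, d_\pi = \dim_M \mathcal{H}_\pi \geq \dim_M V = 1$.

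For (iii): Kleppner's condition on $(\Gamma, \sigma)$ is precisely the statement that the twisted group von Neumann algebra is a factor, hence so is its image $\pi(\Gamma)'' = M$. For a finite factor the coupling-constant dictionary says that a separating vector exists if and only if the coupling constant is $\geq 1$; so if $\pi(\Gamma)''$ admits a separating vector in $\mathcal{H}_\pi$, then $\vol(G/\Gamma)\, d_\pi = \dim_M \mathcal{H}_\pi \geq 1$. Note that (ii) uses only that $M$ is finite, whereas (iii) genuinely needs the factoriality supplied by Kleppner's condition in order to convert the separating-vector hypothesis into a lower bound on the coupling constant --- which is exactly why that hypothesis appears parenthetically in (iii) and nowhere else.
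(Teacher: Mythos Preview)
Your approach is correct and is, in fact, the classical route the paper itself alludes to when it writes that ``the density theorem is a basic application of the dimension theory for group von Neumann algebras.'' However, the proof the paper actually reproduces (in Section~\ref{sec_2}, for the special case of Theorem~\ref{th_as}, following Janssen and \cite{romero2022density}) is deliberately more elementary and bypasses the coupling-constant machinery entirely. There one pairs the orthogonality relations with Weil's quotient-integral formula to obtain, for any $F,H$ in the representation space,
\[
d_\pi^{-1}\,\|F\|^2\,\|H\|^2 \;=\; \int_{G/\Gamma} \sum_{\gamma\in\Gamma} \bigl|\langle \pi(m)^{-1}H,\; \pi(\gamma)F\rangle\bigr|^2 \, d\mu_{G/\Gamma}(m),
\]
which immediately gives $A\,\vol(G/\Gamma) \le d_\pi^{-1}\|F\|^2 \le B\,\vol(G/\Gamma)$ whenever $(\pi(\gamma)F)_{\gamma}$ has frame bounds $A,B$; the frame and Riesz cases are then reduced to Parseval/orthonormal systems by replacing $F$ with $S^{-1/2}F$, where $S$ is the frame operator (which commutes with $\pi(\Gamma)$). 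Your route buys a uniform treatment of all three parts---in particular it handles the cyclic-vector clause of (i) and the separating-vector clause (iii), which the Janssen argument as displayed in the paper does not directly address---while the paper's route buys a short self-contained proof that needs no operator-algebra background. One small caveat on your side: the finiteness of $M=\pi(\Gamma)''$ and the existence of the canonical trace are not consequences of the discreteness of $\Gamma$ alone (an irreducible representation of a discrete group can generate all of $B(\mathcal H)$); they follow here because square-integrability of $\pi$ makes $\pi|_\Gamma$ embed in a multiple of the twisted regular representation via the very intertwiner $V_f$ you describe, and this deserves to be stated explicitly rather than folded into the sentence ``since $\Gamma$ is discrete.''
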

Theorem \ref{th_rvv} is the first half of what we called ``the density theorem''; only parts (i) and (ii) are relevant for this letter.

In the subsequent article \cite{abreu2022affine} the authors define the following projective representation $\tau_n^{\alpha}$ of the group $\mathrm{PSL}(2,\mathbb{R})$ acting on the space $\mathcal{W}_{\psi_n^{\alpha}}=\mathcal{W}_{\psi_n^{\alpha}}(\mathbb{C}^+)$, which is the image of the Hardy space $H^2(\mathbb{R})$ under the wavelet transform with a certain special window function $\psi_n^\alpha \in H^2(\mathbb{R})$:
\begin{align} \label{eq:rep_wavelet_space}
	\tau_n^{\alpha} (m^{-1}) F (z)  = \bigg( \frac{|cz + d|}{cz+d} \bigg)^{2n + \alpha + 1} F(m \cdot z), \quad z \in \mathbb{C}^+, \;
	m = \begin{pmatrix}
		a & b \\
		c & d
	\end{pmatrix}
	\in \mathrm{PSL}(2,\mathbb{R}).
\end{align}
The following is presented as the main result in \cite{abreu2022affine}\footnote{In fact, in \cite{abreu2022affine}, the right-hand of \eqref{eq:rep_wavelet_space} is used as the definition of $\tau_n^{\alpha} (m) F(z)$, which would not define a projective representation. We understand this to be an error.}.

\begin{theorem}[{\cite[Theorem 2.1]{abreu2022affine}}] \label{th_as}
Let $\Gamma \subset \mathrm{PSL}(2,\mathbb{R})$ be a Fuchsian group
with fundamental domain $\Omega \subset \mathbb{C}^{+}$ (of finite volume)
and $F\in \mathcal{W}_{\psi_n^\alpha}$. If $\{\tau_{n}^{\alpha
}(\gamma)F\}_{\gamma \in \Gamma }$ is a frame for $\mathcal{W}_{\psi
	_{n}^{\alpha }}$, then $\Omega $ is compact \footnote{{\bf The claim about compactness is not correct; see below.}} and 
\begin{equation}
	\left\vert \Omega \right\vert \leq \frac{C_{\psi _{n}^{\alpha }}}{\Vert \psi
		_{n}^{\alpha }\Vert _{2}^{2}}=\frac{2}{\alpha }\text{,}  \label{eq:thm-sam}
\end{equation}
where $|\Omega |$ is calculated via the measure $\mu$.

If $\{\tau_{n}^{\alpha }(\gamma)F\}_{\gamma \in \Gamma }$ is a Riesz
sequence for $\mathcal{W}_{\psi _{n}^{\alpha }}$, then 
\begin{equation}
	\left\vert \Omega \right\vert \geq \frac{C_{\psi _{n}^{\alpha }}}{\Vert \psi
		_{n}^{\alpha }\Vert _{2}^{2}}=\frac{2}{\alpha }\text{.}  \label{eq:thm-int}
\end{equation}
\end{theorem}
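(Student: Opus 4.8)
The plan is to show that Theorem~\ref{th_as} is the specialization of Theorem~\ref{th_rvv} to the group $G=\mathrm{PSL}(2,\mathbb{R})$, so that the inequalities \eqref{eq:thm-sam} and \eqref{eq:thm-int} are read off directly from parts~(i) and~(ii). (The assertion that $\Omega$ must be compact does not follow from Theorem~\ref{th_rvv}, and, as explained below, it is false; I would simply omit it.) Throughout I read \eqref{eq:rep_wavelet_space} with the correction noted above, so that $\tau_n^\alpha$ is a genuine $\sigma$-representation.

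First I would record the structural facts. The group $G=\mathrm{PSL}(2,\mathbb{R})$ is locally compact, second countable and unimodular; a Fuchsian group $\Gamma\subset G$ admitting a fundamental domain $\Omega\subset\mathbb{C}^+$ of finite hyperbolic area is precisely a lattice in $G$ (cocompact iff $\Omega$ is compact). Under the identification $\mathbb{C}^+\cong G/K$ with $K=\mathrm{PSO}(2)$ given unit mass, Haar measure on $G$ disintegrates over the hyperbolic measure $\mu$, and for this normalization $\vol(G/\Gamma)=\mu(\Omega)=|\Omega|$. I would fix this normalization of Haar measure from now on.

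Next I would identify the representation. By construction $\psi_n^\alpha\in H^2(\mathbb{R})$ is an admissible vector for a square-integrable (discrete series) $\sigma$-representation $\pi$ of $G$ on $H^2(\mathbb{R})$ --- the representations underlying the wavelet spaces $\mathcal{W}_{\psi_n^\alpha}$, as in the references of \cite{romero2022density} --- and the wavelet transform $W_{\psi_n^\alpha}f=\langle f,\pi(\cdot)\psi_n^\alpha\rangle$ is a constant multiple of an isometry of $H^2(\mathbb{R})$ onto the reproducing-kernel space $\mathcal{W}_{\psi_n^\alpha}$ that intertwines $\pi$ with the left regular action on $\mathcal{W}_{\psi_n^\alpha}$; because $\psi_n^\alpha$ spans a $K$-eigenspace, this action descends to functions on $G/K\cong\mathbb{C}^+$ and is given there by \eqref{eq:rep_wavelet_space}. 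Hence $(\tau_n^\alpha,\mathcal{W}_{\psi_n^\alpha})$ is a discrete series $\sigma$-representation of $G$, and the orthogonality (Duflo--Moore) relations identify its formal dimension $d_{\tau_n^\alpha}>0$ through $C_{\psi_n^\alpha}=\|\psi_n^\alpha\|_2^2/d_{\tau_n^\alpha}$, so that $1/d_{\tau_n^\alpha}=C_{\psi_n^\alpha}/\|\psi_n^\alpha\|_2^2=2/\alpha$.

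It then remains to translate the hypotheses. If $\{\tau_n^\alpha(\gamma)F\}_{\gamma\in\Gamma}$ is a frame for $\mathcal{W}_{\psi_n^\alpha}$, then $F$ is a frame (in particular cyclic) vector for $\tau_n^\alpha|_\Gamma$, and Theorem~\ref{th_rvv}(i) gives $\vol(G/\Gamma)\,d_{\tau_n^\alpha}\le 1$, i.e. $|\Omega|\le 1/d_{\tau_n^\alpha}=2/\alpha$, which is \eqref{eq:thm-sam}. If instead $\{\tau_n^\alpha(\gamma)F\}_{\gamma\in\Gamma}$ is a Riesz sequence, then $F$ is a Riesz vector for $\tau_n^\alpha|_\Gamma$, and Theorem~\ref{th_rvv}(ii) gives $\vol(G/\Gamma)\,d_{\tau_n^\alpha}\ge 1$, i.e. \eqref{eq:thm-int}. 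The main obstacle is purely one of bookkeeping: one must align the three normalizations --- Haar measure on $\mathrm{PSL}(2,\mathbb{R})$, the hyperbolic area $\mu$ on $\mathbb{C}^+$, and the admissibility constant $C_{\psi_n^\alpha}$ --- so that the normalization-free bound $\vol(G/\Gamma)\,d_\pi\le 1$ becomes exactly $|\Omega|\le 2/\alpha$; concretely, one must check against the explicit construction that $\psi_n^\alpha$ is admissible for the discrete series representation with $d_{\tau_n^\alpha}=\alpha/2$ (equivalently, that $\mathcal{W}_{\psi_n^\alpha}$ is the expected weighted Bergman-type space). Once these identifications are made there is nothing further to prove, and in particular compactness of $\Omega$ is never used --- in line with the fact, discussed below, that it can fail.
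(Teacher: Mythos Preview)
Your proposal is correct: reading Theorem~\ref{th_as} as the specialization of Theorem~\ref{th_rvv} to $G=\mathrm{PSL}(2,\mathbb{R})$, identifying $\vol(G/\Gamma)=|\Omega|$ and $d_{\tau_n^\alpha}=\|\psi_n^\alpha\|_2^2/C_{\psi_n^\alpha}=\alpha/2$, and then invoking parts~(i) and~(ii) is exactly the route the paper takes in Section~1. The paper does, however, carry out explicitly the one step you label ``bookkeeping'': the identity $d_{\tau_n^\alpha}=\|\psi_n^\alpha\|_2^2/C_{\psi_n^\alpha}$ is verified by averaging the affine orthogonality relations \eqref{eq_ortho_affine} over $\mathrm{PSO}(2,\mathbb{R})$ using the $K$-eigenvector property \eqref{eq_4} of $\psi_n^\alpha$, rather than being asserted on abstract Duflo--Moore grounds.

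The paper's \emph{labeled} proof (Section~\ref{sec_2}) takes a genuinely different, self-contained route that bypasses Theorem~\ref{th_rvv} altogether. It writes the orthogonality relations directly as the tiled integral \eqref{eq_x} over $G/\Gamma$, from which any upper frame bound $B$ gives $d_{\tau_n^\alpha}^{-1}\|F\|^2\le B\,|\Omega|$ and any lower bound $A$ gives $A\,|\Omega|\le d_{\tau_n^\alpha}^{-1}\|F\|^2$. To remove the dependence on $A,B,\|F\|$, it applies the canonical tight-frame trick: since the frame operator $S$ commutes with each $\tau_n^\alpha(\gamma)$, the system $(\tau_n^\alpha(\gamma)S^{-1/2}F)_\gamma$ is Parseval (resp.\ orthonormal in the Riesz case), forcing $|\Omega|\le d_{\tau_n^\alpha}^{-1}$ (resp.\ $\ge$). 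Your approach buys brevity by quoting Theorem~\ref{th_rvv} as a black box; the paper's direct argument buys transparency, showing that the density inequality is a two-line consequence of \eqref{eq_x} plus the $S^{-1/2}$ normalization, with no appeal to coupling theory or to the general machinery behind Theorem~\ref{th_rvv}.
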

Let us start by mentioning that in \cite[Example 9.2]{romero2022density} we discuss at length how Theorem \ref{th_rvv} applies to the so-called holomorphic discrete series of $\mathrm{PSL}(2,\mathbb{R})$, which exhaust up to complex conjugation and projective unitary equivalence all square integrable irreducible representations of $\mathrm{PSL}(2,\mathbb{R})$ (including \eqref{eq:rep_wavelet_space}).

\subsection{The formal dimension and the admissibility constant are reciprocals}
It is instructive to also compare Theorem \ref{th_rvv} with $G:=\mathrm{PSL}(2,\mathbb{R})$ to Theorem \ref{th_as} directly: we first sort out the jargon and notation. A
$\sigma$-representation is a projective representation (with cocycle $\sigma$). The Hardy space $H^2(\mathbb{R})$ consists of all $f\in L^2(\mathbb{R})$ with Fourier transform $\hat{f}$ supported on $[0,\infty)$.
A Fuchsian group $\Gamma \subset \mathrm{PSL}(2,\mathbb{R})$ is just a lattice and its co-volume
$\vol (G / \Gamma)$ is the Haar measure $|\Omega|$ of any fundamental domain $\Omega$
under the identification $\mathrm{PSL}(2,\mathbb{R}) / \mathrm{PSO}(2,\mathbb{R}) \simeq \mathbb{C}^+$. With the normalizations used in \cite{abreu2022affine}, the norm $\lVert\cdot\rVert_2$ in Theorem \ref{th_as} is with respect to the Lebesgue measure, and $C_{\psi}$ is the \emph{admissibility constant}
\begin{align*}
C_{\psi} = \int_{0}^\infty |\widehat{\psi}(\xi)|^2\, \frac{d\xi}{\xi}, 
\end{align*}
which indeed equals $\frac{2}{\alpha} {\Vert \psi_{n}^{\alpha }\Vert _{2}^{2}}$ when $\psi$ is the special function $\psi_{n}^{\alpha}$.

Thus, to see that Theorem \ref{th_as} is an application of Theorem \ref{th_rvv} we only need to check that the formal dimension of the representation \eqref{eq:rep_wavelet_space} is precisely
\begin{align}\label{eq_equal}
d_{\tau_n^{\alpha}} = \frac{\Vert \psi_{n}^{\alpha }\Vert _{2}^{2}}{C_{\psi _{n}^{\alpha }}}.
\end{align}
While this is clear from an abstract point of view (see \cite{paul1984affine,bertrand2002characterization, combescure2012coherent}, \cite[Section 9]{romero2022density} and the many references therein), we offer the following computation relying only on elementary wavelet theory (see also \cite{klauder1994wavelets}).

First, as soon as $\psi \in H^2(\mathbb{R})$ is \emph{admissible}, i.e., $C_\psi<\infty$, the orthogonality relations for the wavelet transform (also known as Calder\'on's formula) read
\begin{align} \label{eq_ortho_affine}
	\int_{\mathbb{R}} \int_0^\infty  \langle f_1, \rho(a,b) \psi \rangle \overline{\langle f_2,\rho(a,b) \psi \rangle} \; \frac{da}{a^2} db= C_\psi \langle f_1, f_2 \rangle,
\end{align}
where $f_1, f_2 \in H^2$, the inner products are taken in $H^2$, and $\rho(a,b)f (t) = a^{-1/2} f (a^{-1} (t-b))$. On the other hand, the formal dimension $d_{\tau_n^{\alpha}}$ of the representation \eqref{eq:rep_wavelet_space} is by definition the unique number such that
\begin{align}\label{eq_or}
\begin{aligned}
	&\int_{G}
	\langle \tau^\alpha_n(m) W_{\psi_n^\alpha} f_1,
	W_{\psi_n^\alpha} g_1\rangle
	\overline{
		\langle \tau_n^{\alpha}(m) W_{\psi_n^\alpha} f_2,
		W_{\psi_n^\alpha} g_2\rangle} \,d\mu_G(m)
	\\
	&\qquad=
	\frac{1}{d_{\tau_n^{\alpha}}}
	\langle{W_{\psi_n^\alpha} f_1},{W_{\psi_n^\alpha} f_2}\rangle
	\overline{\langle{W_{\psi_n^\alpha} g_1},{W_{\psi_n^\alpha} g_2}\rangle},
\end{aligned}
\end{align}
for all $f_1, f_2, g_1, g_2 \in H^2$, where $\mu_G$ denotes the Haar measure on $G =  \mathrm{PSL}(2, \mathbb{R})$. 

By definition, $W_{\psi_n^\alpha} f (a,b) = \langle f, \rho(a,b) \psi_n^\alpha \rangle$, and comparison between \eqref{eq_ortho_affine} and \eqref{eq_or} already hints at \eqref{eq_equal}. Let us verify this. 
Note first that under the identification $(a,b) \mapsto ai+b$ the affine group $\mathbb{R}^+ \ltimes \mathbb{R} \simeq \mathbb{C}^+$ embeds into $\mathrm{PSL}(2,\mathbb{R})$ by means of the matrices
\begin{align}\label{eq_2}
	\mathrm{m}_{a,b} =\begin{pmatrix}
		\sqrt{a}&b/\sqrt{a}\\
		0&1/\sqrt{a}
	\end{pmatrix},
	\qquad (a,b) \in \mathbb{R}^+ \ltimes \mathbb{R},
\end{align}
and that these act by $\tau^{\alpha}_n (\mathrm{m}_{a,b}) F (z) = F ( \mathrm{m}^{-1}_{a,b} \cdot z) = F\big(\tfrac{z-b}{a}\big)$. On the other hand, the fundamental property of the wavelet transform is that it intertwines $\rho$ and the regular representation of the affine group on $W_\psi H^2$. Concretely,
\begin{align}\label{eq_inva}
 W_\psi \rho(a,b) f = \tau^{\alpha}_n (\mathrm{m}_{a,b}) W_\psi f.
\end{align}
What is special of the wavelets $\psi^\alpha_n$ is how they interact with rotations \cite{paul1984affine,MR1443157}. In the parlance of Perelomov \cite{perelomov1972coherent}, they are \emph{stationary vectors} of the subgroup $\mathrm{PSO}(2,\mathbb{R})$, cf. \cite[Section 9.1.3]{romero2022density}:
\begin{align}\label{eq_4}
	\tau_n^{\alpha}(\mathrm{r}) W_{\psi^\alpha_n} \psi^\alpha_n = e^{i \phi_{n,\alpha, \mathrm{r}}} W_{\psi^\alpha_n} \psi^\alpha_n, \qquad \mathrm{r} \in 
	\mathrm{PSO}(2, \mathbb{R}),
\end{align}
for some $\phi_{n,\alpha, \mathrm{r}} \in \mathbb{R}$. (Here and above, though we prefer to use the language of representation theory, we only state elementary facts that would not demand the jargon.)

The stationary property \eqref{eq_4} allows us to average \eqref{eq_ortho_affine} over $\mathrm{PSO}(2,\mathbb{R})$ to obtain \eqref{eq_or} with the desired value of $d_{\tau_n^{\alpha}}$. More precisely, taking $f_1=f_2=g_1=g_2=\psi^\alpha_n$ in \eqref{eq_or} and combining  \eqref{eq_ortho_affine}, \eqref{eq_4} and \eqref{eq_inva} we write
\begin{align*}
	C_{\psi_n^{\alpha}}^2 \cdot \| \psi_n^{\alpha} \|_{2}^4&=
	\|{W_{\psi_n^\alpha} \psi^\alpha_n}\|_{L^2}^4
	\\
	&=d_{\tau^\alpha_n}
	\int_{\mathrm{PSL}(2,\mathbb{R})}
	| \langle{\tau^\alpha_n(m) W_{\psi_n^\alpha} \psi_n^\alpha},{W_{\psi_n^\alpha} \psi_n^\alpha}\rangle|^2 \,d\mu_G (m)
	\\
	&=d_{\tau^\alpha_n}
	\int_{\mathbb{R}} \int_0^\infty 
	\int_{\mathrm{PSO}(2,\mathbb{R})}
	| \langle{\tau^\alpha_n(\mathrm{m}_{a,b} \mathrm{r}) W_{\psi_n^\alpha} \psi_n^\alpha},{W_{\psi_n^\alpha} \psi_n^\alpha}\rangle|^2 \, d\mathrm{r} \,\frac{da}{a^2}db
	\\
	&=d_{\tau^\alpha_n}
	\int_{\mathbb{R}} \int_0^\infty 
	\int_{\mathrm{PSO}(2,\mathbb{R})}
	| \langle{\tau^\alpha_n(\mathrm{m}_{a,b}) \tau^\alpha_n(\mathrm{r}) W_{\psi_n^\alpha} \psi_n^\alpha},{W_{\psi_n^\alpha} \psi_n^\alpha}\rangle|^2 \, d\mathrm{r} \,\frac{da}{a^2}db
	\\
	&=d_{\tau^\alpha_n}
	\int_{\mathbb{R}} \int_0^\infty 
	| \langle{\tau^\alpha_n(\mathrm{m}_{a,b}) W_{\psi_n^\alpha} \psi_n^\alpha},{W_{\psi_n^\alpha} \psi_n^\alpha}\rangle|^2 \,\frac{da}{a^2}db
	\\
	&= d_{\tau^\alpha_n}
	\int_{\mathbb{R}} \int_0^\infty 
	| \langle W_{\psi_n^\alpha} \rho(a,b) \psi_n^{\alpha},{W_{\psi_n^\alpha} \psi_n^\alpha}\rangle|^2 \,\frac{da}{a^2}db
	\\
	&= d_{\tau_n^{\alpha}} \cdot C^2_{\psi_n^{\alpha}} \int_{\mathbb{R}} \int_0^\infty  |\langle  \psi_n^{\alpha},  \rho (a,b)  \psi_n^{\alpha} \rangle|^2 \; \frac{da}{a^2}db
	\\
	&= d_{\tau_n^{\alpha}} \cdot C^3_{\psi_n^{\alpha}} \cdot \| \psi_n^{\alpha} \|_{2}^2,
\end{align*}
which gives \eqref{eq_equal}. (Caveat emptor: we tried to follow the normalizations used in \cite{abreu2022affine}, but did not check their consistency.)

\subsection{Further remarks}
Incidentally, the argument above also shows that \eqref{eq:rep_wavelet_space} does preserve the range of the wavelet transform ${W}_{\psi_n^{\alpha}}$. Indeed, as $\tau_n^{\alpha}$ extends the regular representation of the affine group by means of \eqref{eq_2}, it follows that $\mathcal{W}_{\psi_n^{\alpha}}$ is the closed linear span of all elements of the form $\tau^\alpha_n(\mathrm{m}_{a,b})W_{\psi_n^{\alpha}} {\psi_n^{\alpha}}$. 
By \eqref{eq_4}, further acting with a rotation $\mathrm{r}$ results in
$\tau^\alpha_n(\mathrm{r}) \tau^\alpha_n(\mathrm{m}_{a,b})W_{\psi_n^{\alpha}} {\psi_n^{\alpha}} \sim \tau^\alpha_n(\mathrm{r} \mathrm{m}_{a,b}  )W_{\psi_n^{\alpha}} {\psi_n^{\alpha}} \sim \tau^\alpha_n(\mathrm{m}_{c,d} \mathrm{r}')W_{\psi_n^{\alpha}} {\psi_n^{\alpha}} \sim 
\tau^\alpha_n(\mathrm{m}_{c,d})W_{\psi_n^{\alpha}} {\psi_n^{\alpha}}
$, where $\mathrm{r} \mathrm{m}_{a,b}=\mathrm{m}_{c,d} \mathrm{r}'$ with $\mathrm{r}' \in \mathrm{PSO}(2,\mathbb{R})$ and $\sim$ denotes equality up to a unimodular factor.

Finally we point out that the compactness claim in Theorem \ref{th_as} is not correct. Indeed, if $\Gamma \subseteq \mathrm{PSL}(2,\mathbb{R})$ is any Fuchsian group (co-compact or not) satisfying $\vol (G / \Gamma) d_{\tau^\alpha_n} \leq 1$ then there exists $F \in \mathcal{W}_{\psi_n^{\alpha}}$ such that $\{\tau_{n}^{\alpha
}(\gamma)F\}_{\gamma \in \Gamma }$ is a frame for $\mathcal{W}_{\psi
	_{n}^{\alpha }}$; see \cite[Corollaries 3 and 4]{bekka2004square} or \cite[Theorem 8.1]{romero2022density}. See \cite[Example 1]{bekka2004square} for a concrete example of a non co-compact Fuchsian group satisfying the volume bound for appropriate $\alpha$.

\section{A direct proof of Theorem \ref{th_as}}\label{sec_2}
While Theorem \ref{th_as} is a special case of Theorem \ref{th_rvv}, their presentations in \cite{romero2022density} and \cite{abreu2022affine} differ greatly. Our article \cite{romero2022density} is a research survey on what we called ``the density theorem'', where Theorem \ref{th_rvv} is complemented with converse implications, showing that volume conditions also imply the existence of suitable coherent systems. While the density theorem is a basic application of the dimension theory for group von Neumann algebras, \cite{romero2022density} provides an elementary self-contained exposition based on frame theory. For example, Theorem \ref{th_rvv} is proved by a remarkably simple argument due to Janssen in the context of the short-time Fourier transform \cite{janssen_density}, which we now reproduce in the context of Theorem \ref{th_as}.

\begin{proof}[Proof of Theorem \ref{th_as} following \cite{janssen_density, romero2022density}]
(i)
By the orthogonality relations \eqref{eq_or}, for $F,H \in \mathcal{W}_{\psi_n^{\alpha}}$,
\begin{align}\label{eq_x}
d_{\tau_n^{\alpha}}^{-1} \| F \|^2_{L^2} \| H \|^2_{L^2} = \int_G |\langle H, \tau_n^{\alpha} (m) F \rangle |^2 \; d\mu_G (m) = \int_{G/\Gamma} \sum_{\gamma \in \Gamma} |\langle \tau_n^{\alpha}(m)^{-1} H, \tau_n^{\alpha} (\gamma) F \rangle |^2 \; d\mu_{G/\Gamma} (m),
\end{align}
with $d_{\tau_n^{\alpha}}$ given by \eqref{eq_equal}. Therefore, if $(\tau_n^{\alpha} (\gamma) F)_{\gamma \in \Gamma}$ has an upper frame bound $B > 0$, then $d_{\tau^{\alpha}_n}^{-1} \| F \|_{L^2}^2 \leq B \vol(G/\Gamma)$. Similarly, if $(\tau_n^{\alpha} (\gamma) F)_{\gamma \in \Gamma}$ has a lower frame bound $A > 0$, then $\vol(G/\Gamma) A \leq d_{\tau^{\alpha}_n}^{-1} \| F \|_{L^2}^2 $.

(ii) Suppose $(\tau_n^{\alpha} (\gamma) F)_{\gamma \in \Gamma}$ is a frame. Then the frame operator $S := \sum_{\gamma \in \Gamma} \langle \cdot , \tau_n^{\alpha} (\gamma) F \rangle \tau_n^{\alpha} (\gamma) F$ is bounded and invertible on $\mathcal{W}_{\psi_n^{\alpha}}$, and commutes with  $\tau_n^{\alpha} (\gamma)$ for all $\gamma \in \Gamma$ (even if $\tau_n^{\alpha}$ is only a projective representation!). Therefore, $
(\tau_n^{\alpha} (\gamma) S^{-1/2} F )_{\gamma \in \Gamma} = ( S^{-1/2} \tau_n^{\alpha} (\gamma)  F )_{\gamma \in \Gamma}
$ is a frame with frame bounds $A=B=1$. Hence, $\| S^{-1/2} F \| \leq 1$, and (i) implies that $\vol(G/\Gamma) \leq d_{\tau_n^{\alpha}}^{-1}$.

(iii) Suppose $(\tau_n^{\alpha} (\gamma) F)_{\gamma \in \Gamma}$ is a Riesz sequence. Then the frame operator $S$ is bounded and invertible on $\mathcal{K} := \overline{\Span \{ \tau_n^{\alpha} (\gamma) F : \gamma \in \Gamma \}}$ and $
(\tau_n^{\alpha} (\gamma) S^{-1/2} F )_{\gamma \in \Gamma} = ( S^{-1/2} \tau_n^{\alpha} (\gamma)  F )_{\gamma \in \Gamma}
$ is an orthonormal sequence in $\mathcal{W}_{\psi_n^{\alpha}}$. In particular, it has an upper frame bound $B = 1$ and $\| S^{-1/2} F \|_{L^2} = 1$, and (i) shows that $\vol(G/\Gamma) \geq d_{\tau_n^{\alpha}}^{-1}$.
\end{proof}
Note that the above proof invokes \eqref{eq_equal}.  Alternatively, one can invoke \eqref{eq_ortho_affine} and average over $PSO(2,\mathbb{R})$ to directly obtain \eqref{eq_x} with the prescribed value of $d_{\tau_n^{\alpha}}$.

\section{The literature and attribution of results}
We now offer contrasting views to the presentation of the literature in \cite{abreu2022affine} and take exception to certain attributions.

\subsection{The status of the problem}
Theorem \ref{th_as} is presented in \cite{abreu2022affine} as a solution to a longstanding problem of Perelomov. While Perelomov did mention the problem of relating the spanning properties of coherent systems to the co-volume of the generating lattice, the answer provided by the coupling theory of group von Neumann algebras has been known for a long time. For example, Bekka \cite[Remark 2.10]{bekka2000square} describes the statement in part (i) of Theorem \ref{th_rvv} as an answer to Perelomov's question. The statement in part (ii) of Theorem \ref{th_rvv} seems to be, on the other hand, a moderately original contribution of \cite{romero2022density}, whereas the related notion of separating vector is more common in the operator algebra literature (see \cite[Proposition 5.2]{romero2022density}.) Further historical remarks can be found in \cite[Section 1 and Section 9.1.1, ``Perelomov’s uniqueness problem'']{romero2022density}.

Second, Sections 1.1 and 1.2 of \cite{abreu2022affine} elaborate on the many technical challenges that the exponential growth of balls in the hyperbolic metric presents when attempting to formulate a density theory for coherent systems. Exponential growth is indeed an obstacle to many common approaches to frame density results, as these require being able to neglect boundary interactions at large scales. However, as evidenced by \eqref{eq_x}, such technical challenges simply do not arise in the context of Theorems \ref{th_rvv} and \ref{th_as}, since the boundaries of the different tiles in the partition of the upper-half plane induced by a lattice do not interact.

\subsection{Our work}
Firstly, our research survey \cite{romero2022density} discusses at length the application of Theorem \ref{th_rvv} - and its counterpart concerning the existence of coherent systems - to the holomorphic series of $\mathrm{PSL}(2,\mathbb{R})$, which exhaust up to complex conjugation and projective unitary equivalence all square integrable irreducible representations of $\mathrm{PSL}(2,\mathbb{R})$. The statements in Theorem \ref{th_as} are of course invariant under unitary equivalence.

Secondly, the special case of Theorem \ref{th_as} with $F=W_{\psi_n^{\alpha}}\psi_n^{\alpha}$ is attributed to us in \cite{abreu2022affine}, whereas the reader is led to understand that the generalization to arbitrary $F$ is the original contribution of \cite{abreu2022affine}.
While, indeed, in \cite[Section 9.1.3, ``Perelomov's problem with respect to other special vectors'']{romero2022density} we discuss the application of Theorem \ref{th_rvv} to the special vectors in question, the very same arguments apply to any other generator. The reason why we chose to discuss rotationally symmetric generators is that, in our opinion, this is the case where the resulting coherent system can be called an \emph{affine system}, since, as explained in
\cite{klauder1994wavelets}, rotations can be factored out, and the orbit can be described up to constant factors in terms of the affine maps \eqref{eq_2}.

Thirdly, with a certain physical motivation, the authors of \cite{ABDM} consider the above-mentioned affine coherent states and conclude that the density condition 
\begin{align*}
|\Omega| \leq \frac{4 (n+1)}{\alpha} 
\end{align*}
is necessary for completeness. In \cite[Section 9.1.3]{romero2022density} we point out that coupling theory, as embodied in Theorem \ref{th_rvv}, offers the sharper bound
\begin{align*}
	|\Omega| \leq \frac{2}{\alpha}. 
\end{align*}
The final remark in \cite[Section 2]{abreu2022affine} on the ``physical relevance of the results'' may give the impression that this observation is original to \cite{abreu2022affine}; we invite the interested reader to look into \cite[Section 9.1.3]{romero2022density}.

\section*{Acknowledgment}
J. L. R. gratefully acknowledges support from the Austrian Science Fund (FWF): Y 1199.

\end{document}